\definecolor{darkblue}{rgb}{0.0, 0.0, 0.55}
\definecolor{bordeaux}{rgb}{0.34, 0.01, 0.1}
\newtheorem{theorem}{Theorem}[section]
\newtheorem{definition}[theorem]{Definition}
\newtheorem{example}[theorem]{Example}
\newtheorem{proposition}[theorem]{Proposition}
\newtheorem{remark}[theorem]{Remark}
\def\R{{\mathbb{R}}}
\def\N{{\mathbb{N}}}
\def\x{{\mathbf{x}}}
\def\a{{\boldsymbol{\alpha}}}
\def\b{{\boldsymbol{\beta}}}
\def\g{{\boldsymbol{\gamma}}}
\def\A{{\mathscr{A}}}
\def\AA{{\mathcal{A}}}
\def\B{{\mathscr{B}}}
\def\supp{\hbox{\rm{supp}}}
\def\int{\hbox{\rm{int}}}
\def\New{\hbox{\rm{New}}}
\newif\ifcomment
\g@addto@macro\normalsize{%
 \setlength\abovedisplayskip{4pt plus 5pt minus 0pt} 
 \setlength\belowdisplayskip{4pt plus 5pt minus 0pt} 
 \setlength\abovedisplayshortskip{4pt plus 5pt minus 0pt} 
 \setlength\belowdisplayshortskip{4pt plus 5pt minus 0pt} 
}
\title{SparseJSR: A Fast Algorithm to Compute Joint Spectral Radius via Sparse SOS Decompositions}
\author{Jie Wang, Martina Maggio and Victor Magron}
\begin{document}

\maketitle
\thispagestyle{empty}
\pagestyle{empty}

\begin{abstract}
This paper focuses on the computation of the joint spectral radius (JSR), when the involved matrices are sparse.
We provide a sparse variant of the procedure proposed by Parrilo and Jadbabaie to compute upper bounds of the JSR by means of sum-of-squares (SOS) programming.
Our resulting iterative algorithm, called {\tt SparseJSR}, is based on the \emph{term sparsity} SOS (TSSOS) framework developed by Wang, Magron and Lasserre, which yields SOS decompositions of polynomials with arbitrary sparse supports. 
{\tt SparseJSR} exploits the sparsity of the input matrices to significantly reduce the computational burden associated with the JSR computation. 
Our algorithmic framework is then successfully applied to compute upper bounds for JSR on randomly generated benchmarks as well as on problems arising from stability proofs of controllers, in relation with possible hardware and software faults.
\end{abstract}

\maketitle

\section{Introduction}
Given a set of matrices $\AA=\{A_1,\ldots,A_m\}\subseteq\R^{n\times n}$, the {\em joint spectral radius} (JSR) of $\AA$ is defined by 
\begin{equation}\label{jsr}
\rho(\AA):=\lim_{k\rightarrow\infty}\max_{\sigma\in\{1,\ldots,m\}^k}||A_{\sigma_1}A_{\sigma_2}\cdots A_{\sigma_k}||^{\frac{1}{k}},
\end{equation}
which characterizes the maximal asymptotic growth rate of products of matrices from $\AA$. Note that the value of $\rho(\AA)$ is independent of the choice of the norm used in \eqref{jsr}. When $\AA$ contains a single matrix, the JSR coincides with the usual spectral radius. Hence JSR can be viewed as a generalization of the usual spectral radius to the case of multiple matrices.

The concept of JSR was first introduced by Rota and Strang in \cite{rota} and since then has found applications in many areas such as the stability of switched linear dynamical systems, the continuity of wavelet functions, combinatorics and language theory, the capacity of some codes, the trackability of graphs. We refer the readers to \cite{jungers} for a survey of the theory and applications of JSR.

Inspired by the various applications, there has been a lot of work on the computation of JSR; see e.g.  \cite{ahmadi,blondel3,gripenberg,guglielmi1,parrilo,protasov} to name a few. Unfortunately, it turns out that the exact computation and even the approximation of JSR are notoriously difficult \cite{Blondel2}. It was proved in \cite{blondel1} that the problem of deciding whether $\rho(\AA)\le1$ is undecidable even for $\AA$ consisting of two matrices. Therefore, various methods focus on computing lower bounds and upper bounds for JSR \cite{ahmadi,blondel3,gripenberg,parrilo}.

Parrilo and Jadbabaie proposed in \cite{parrilo} a sum-of-squares (SOS) approach which makes use of semidefinite programming (SDP) to compute a sequence of upper bounds $\{\rho_{SOS,2d}(\AA)\}_{d\ge1}$ for $\rho(\AA)$. They proved that the sequence $\{\rho_{SOS,2d}(\AA)\}_{d\ge1}$ converges to $\rho(\AA)$ when $d$ increases. In practice, mostly often even small $d$ (e.g., $d=1,2$) can provide upper bounds of good quality for $\rho(\AA)$. Once the upper bound coincides with a lower bound provided by other methods, then we obtain the exact value of the JSR. However, the computational burden of the SOS approach grows rapidly when the matrix size or $d$ increases. Given the current state of SDP solvers, this approach can only handle matrices of modest sizes when $d\ge2$.

For general polynomial optimization problems (POP), one way to reduce the computational cost of the associated SOS relaxations is to exploit the so-called \emph{correlative sparsity pattern} relative to the variables of the POP \cite{waki}.  
To build these sparse SOS relaxations, one relies on the \emph{correlative sparsity pattern (csp) graph} of the POP. The nodes of the csp graph are the variables and two nodes are connected via an edge when the corresponding variables appear in the same term of the objective function or in the same constraint involved in the POP.
This approach was successfully used for several interesting applications, including certified roundoff error bounds \cite{toms17}, optimal powerflow problems \cite{josz2018lasserre}, noncommutative optimization \cite{klep2019sparse}, Lipschitz constants of ReLU networks \cite{chen2020polynomial}, robust geometric perception \cite{yang2020one}.

A complementary workaround is to take into account 
\emph{term sparsity} (TS) of the input data to obtain sparse SOS relaxations, as recently studied in \cite{wang,wang2,wang3}, yielding the so-called TSSOS framework.
%
TSSOS relies on the {\em term sparsity pattern (tsp) graph} related to the input polynomials. 
To build the associated sparse SOS relaxations, one connects the nodes of this graph (corresponding to monomials from a monomial basis) whenever the product of the corresponding monomials either appears in the supports of input polynomials or is a monomial of even degree.
Recent applications include learning and forecasting of linear systems  \cite{zhou2020proper,zhou2020fairness} via reformulation into noncommutative polynomial optimization and exploiting term sparsity to reduce the size of the associated relaxations.
Note that term sparsity can be combined with correlative sparsity to reduce even further the size of the associated relaxations \cite{miller2019decomposed,wang4}.

The original underlying motivation of this paper was to apply term sparsity to improve the scalability of JSR computation arising from the study of deadline hit and deadline miss \cite{maggio2020control}. In this case, the computation of the control signal can fail due to a hardware and software fault, causing either no update or a delayed application of the control signal. The main application in this case is to determine how long the controller can operate in a faulty state (in which it does not complete the computation in due time, causing a deadline miss) before the stability of the system is compromised.
The idea is to bound the JSR of products between state matrices associated to deadline hit and deadline miss by solving a POP \cite{ahmadi}.
For such JSR problems, matrices of large sizes issued from applications reveal certain kinds of sparsity in many cases.
A natural question is: can we exploit the sparsity of matrices to improve the scalability of the SOS approach and to compute upper bounds more efficiently? In this paper, we address this specific question. 

\vspace{1em}
\noindent\textbf{Contributions and outline}:
In Section \ref{sec:background}, we recall preliminary background about SOS forms, chordal graphs and approximation of JSR via SOS programming. 
To make the current paper as self-contained as possible, Section \ref{sec:sparsesos} is dedicated to detailed explanation about sparse SOS decompositions via generation of smaller monomial bases and exploitation of the block structure of Gram matrices.
Our main contribution is described in Section \ref{sec:sparsejsr}. We propose a so-called {\tt SparseJSR} algorithm, which is based on the SOS approach and in coordination with the sparsity of matrices appearing within the JSR computation. The algorithm is implemented in the open-source Julia package, also called {\tt SparseJSR}, and is freely available\footnote{%
\href{https://github.com/wangjie212/SparseJSR}{https://github.com/wangjie212/SparseJSR}}.
The performance of {\tt SparseJSR} is then illustrated in Section \ref{sec:benchs}, first on randomly generated benchmarks, and then on benchmarks coming from the study of deadline hit/miss in \cite{maggio2020control}.
Although our sparse version of the SOS approach is not guaranteed to produce upper bounds for JSR as good as the dense one with the same relaxation order, the numerical experiments in this paper demonstrate that our sparse approach is able to produce upper bounds of rather good quality but at a significantly cheaper computational cost compared to the dense approach.

\section{Notation and Preliminaries}
\label{sec:background}
Let $\x=(x_1,\ldots,x_n)$ be a tuple of variables and $\R[\x]=\R[x_1,\ldots,x_n]$ be the ring of real $n$-variate polynomials. We use $\R[\x]_{2d}$ to denote the set of forms (i.e., homogeneous polynomials) of degree $2d$ for $d\in\N$. A polynomial $f\in\R[\x]$ can be written as $f(\x)=\sum_{\a\in\A}f_{\a}\x^{\a}$ with $f_{\a}\in\R, \x^{\a}=x_1^{\alpha_1}\cdots x_n^{\alpha_n}$. The {\em support} of $f$ is defined by $\supp(f):=\{\a\in\A\mid f_{\a}\ne0\}$. We use $|\cdot|$ to denote the cardinality of a set. For a nonempty finite set $\A\subseteq\N^n$, let $\R[\A]$ be the set of polynomials in $\R[\x]$ whose supports are contained in $\A$, i.e., $\R[\A]=\{f\in\R[\x]\mid\supp(f)\subseteq\A\}$ and let $\x^{\A}$ be the $|\A|$-dimensional column vector consisting of elements $\x^{\a},\a\in\A$ (fix any ordering on $\N^n$). For convenience, we abuse notation a bit in this paper and use also $\B\subseteq\N^n$ (resp. $\b\in\N^n$) to denote a monomial set (resp. a monomial). For a positive integer $r$, let $\mathbf{S}^r$ be the set of $r\times r$ symmetric matrices and the set of $r\times r$ positive semidefinite (PSD) matrices is denoted by $\mathbf{S}_+^r$.

\subsection{SOS forms}
Given a form $f\in\R[\x]_{2d}$ with $d\in\N$, if there exist forms $f_1,\ldots,f_t\in\R[\x]_{d}$ such that $f=\sum_{i=1}^tf_i^2$, then we say that $f$ is a \textit{sum-of-squares} (SOS) form. The set of SOS forms in $\R[\x]_{2d}$ is denoted by $\Sigma_{n,2d}$. For $d\in\N$, let $\N^n_d:=\{(\alpha_i)_{i=1}^n\in\N^n\mid\sum_{i=1}^n\alpha_i=d\}$ and assume that $f\in\R[\x]_{2d}$. Then deciding whether $f\in\Sigma_{n,2d}$ is equivalent to verifying the existence of a PSD matrix $Q$ (which is called a \textit{Gram matrix} for $f$) such that
\begin{equation}\label{gram}
f=(\x^{\N^n_{d}})^TQ\x^{\N^n_{d}},
\end{equation}
which can be formulated as a semidefinite program (SDP). The monomial basis $\x^{\N^n_{d}}$ used in \eqref{gram} is called the {\em standard monomial basis}.

\subsection{Chordal graphs and sparse matrices}
An (undirected) {\em graph} $G(V,E)$ or simply $G$ consists of a set of nodes $V$ and a set of edges $E\subseteq\{\{v_i,v_j\}\mid (v_i,v_j)\in V\times V\}$. For a graph $G(V,E)$, a {\em cycle} of length $k$ is a set of nodes $\{v_1,v_2,\ldots,v_k\}\subseteq V$ with $\{v_k,v_1\}\in E$ and $\{v_i, v_{i+1}\}\in E$ for $i=1,\ldots,k-1$. A {\em chord} in a cycle $\{v_1,v_2,\ldots,v_k\}$ is an edge $\{v_i, v_j\}$ that joins two nonconsecutive nodes in the cycle. A graph is called a {\em chordal graph} if all its cycles of length at least four have a chord. Chordal graphs include some common classes of graphs, such as complete graphs, line graphs and trees, and have applications in sparse matrix theory \cite{va}. Any non-chordal graph $G(V,E)$ can always be extended to a chordal graph $\overline{G}(V,\overline{E})$ by adding appropriate edges to $E$, which is called a {\em chordal extension} of $G(V,E)$. A {\em clique} $C\subseteq V$ of $G$ is a subset of nodes where $\{v_i,v_j\}\in E$ for any $v_i,v_j\in C$. If a clique $C$ is not a subset of any other clique, then it is called a {\em maximal clique}. It is known that maximal cliques of a chordal graph can be enumerated efficiently in linear time in the number of nodes and edges of the graph \cite{bp}.

For a graph $G$, the chordal extension of $G$ is usually not unique. We would prefer a chordal extension with the smallest clique number. Finding a chordal extension with the smallest clique number is an NP-complete problem in general. Fortunately, several heuristic algorithms are known to efficiently produce a good approximation \cite{treewidth}.

Given a graph $G(V,E)$, a symmetric matrix $Q$ with row and column indices labeled by $V$ is said to have sparsity pattern $G$ if $Q_{\b\g}=Q_{\g\b}=0$ whenever $\b\ne\g$ and $\{\b,\g\}\notin E$. Let $\mathbf{S}_G$ be the set of symmetric matrices with sparsity pattern $G$. A matrix in $\mathbf{S}_G$ exhibits a block structure (after an appropriate permutation of rows and columns) as illustrated in Figure \ref{qbd}. Each block corresponds to a maximal clique of $G$. The maximal block size is the maximal size of maximal cliques of $G$, namely, the \emph{clique number} of $G$. Note that there might be overlaps between blocks because different maximal cliques may share nodes.

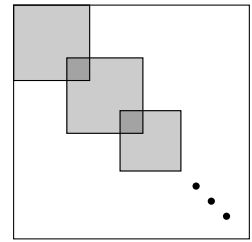
\begin{figure}[htbp]
\begin{center}
\begin{tikzpicture}
\draw (0,0) rectangle (3.1,-3.1);
\draw[fill=black, opacity=0.2] (0,0) rectangle (1,-1);
\draw (0,0) rectangle (1,-1);
\draw[fill=black, opacity=0.2] (0.7,-0.7) rectangle (1.7,-1.7);
\draw (0.7,-0.7) rectangle (1.7,-1.7);
\draw[fill=black, opacity=0.2] (1.4,-1.4) rectangle (2.2,-2.2);
\draw (1.4,-1.4) rectangle (2.2,-2.2);
\fill (2.4,-2.4) circle (0.3ex);
\fill (2.6,-2.6) circle (0.3ex);
\fill (2.8,-2.8) circle (0.3ex);
\end{tikzpicture}
\end{center}
\caption{A block structure of matrices in $\mathbf{S}_G$. The gray area indicates the positions of possible nonzero entries.}\label{qbd}
\end{figure}

Given a maximal clique $C$ of $G(V,E)$, we define an {\em indexing matrix} $P_{C}\in \R^{|C|\times |V|}$ as
\begin{equation}\label{sec2-eq6}
[P_{C}]_{i\b}=\begin{cases}
1, &\textrm{if }C(i)=\b,\\
0, &\textrm{otherwise},
\end{cases}
\end{equation}
where $C(i)$ denotes the $i$-th node in $C$, sorted in the ordering compatible with $V$. Note that $Q_{C}=P_{C}QP_{C}^T\in \mathbf{S}^{|C|}$ extracts a principal submatrix $Q_C$ defined by the indices in the clique $C$ from a symmetric matrix $Q$, and $Q=P_{C}^TQ_{C}P_{C}$ inflates a $|C|\times|C|$ matrix $Q_{C}$ into a sparse $|V|\times |V|$ matrix $Q$.

PSD matrices with sparsity pattern $G$ form a convex cone
\begin{equation}\label{sec2-eq5}
\mathbf{S}_+^{|V|}\cap\mathbf{S}_G=\{Q\in\mathbf{S}_G\mid Q\succeq0\}.
\end{equation}
When the sparsity pattern graph $G$ is chordal, the cone $\mathbf{S}_+^{|V|}\cap\mathbf{S}_G$ can be
decomposed as a sum of simple convex cones, as stated in the following theorem.
\begin{theorem}[\cite{agler}]\label{sec2-thm}
Let $G(V,E)$ be a chordal graph and assume that $C_1,\ldots,C_t$ are the list of maximal cliques of $G(V,E)$. Then a matrix $Q\in\mathbf{S}_+^{|V|}\cap\mathbf{S}_G$ if and only if there exists $Q_{k}\in \mathbf{S}_+^{|C_k|}$ for $k=1,\ldots,t$ such that $Q=\sum_{k=1}^tP_{C_k}^TQ_{k}P_{C_k}$.
\end{theorem}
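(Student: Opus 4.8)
The plan is to prove the two implications separately: the ``if'' direction is a direct congruence argument, while the ``only if'' direction goes by induction on $|V|$, repeatedly peeling off a simplicial node of the chordal graph $G$.

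For ``if'', assume $Q=\sum_{k=1}^tP_{C_k}^TQ_kP_{C_k}$ with each $Q_k\in\mathbf{S}_+^{|C_k|}$. Every summand $P_{C_k}^TQ_kP_{C_k}$ is positive semidefinite, being a congruence transform of $Q_k\succeq0$, and all of its possibly nonzero entries lie in the $C_k\times C_k$ block; since $C_k$ is a clique, each off-diagonal such position is an edge of $G$. Summing over $k$ gives $Q\in\mathbf{S}_+^{|V|}\cap\mathbf{S}_G$.

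For ``only if'', induct on $|V|$. If $G$ is complete, the unique maximal clique is $C_1=V$, $\mathbf{S}_G=\mathbf{S}^{|V|}$, and $Q=P_{C_1}^TQP_{C_1}$ works. Otherwise pick a simplicial node $v$ (which exists in any chordal graph); then $\{v\}\cup N(v)$ is a clique containing every maximal clique through $v$, so $v$ lies in a single maximal clique $C:=\{v\}\cup N(v)$, which we relabel $C_t$. Put $N:=N(v)$ and $U:=V\setminus C$; since $Q\in\mathbf{S}_G$, the entries of $Q$ linking $v$ to $U$ vanish. If $Q_{vv}>0$, let $w$ be the $v$-th column of $Q$ (supported on $C$) and set $M:=Q-Q_{vv}^{-1}ww^T$: a Schur-complement computation gives $M\succeq0$, the $v$-th row and column of $M$ are zero, and $M$ agrees with $Q$ except on the $N\times N$ block, so the principal submatrix $M'$ of $M$ indexed by $V\setminus\{v\}$ lies in $\mathbf{S}_+^{|V|-1}\cap\mathbf{S}_{G\setminus\{v\}}$. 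The graph $G\setminus\{v\}$ is chordal with fewer nodes, and each of its maximal cliques is contained in some $C_k$, so the induction hypothesis decomposes $M'$; padding those blocks with a zero $v$-row/column and adding back the rank-one positive semidefinite term $Q_{vv}^{-1}ww^T$, which is supported on $C_t$, yields the claimed decomposition of $Q$. If $Q_{vv}=0$, positive semidefiniteness forces the whole $v$-th row and column of $Q$ to vanish, so $Q$ reduces to a matrix on $V\setminus\{v\}$ with pattern $G\setminus\{v\}$, and the induction hypothesis finishes the argument (taking $Q_t=0$, or folding the block indexed by $N$ into $C_t$).

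The main obstacle is the bookkeeping in the inductive step rather than any hard estimate: one must check that deleting the simplicial node keeps the sparsity pattern chordal, that the Schur-complement correction stays inside the clique $C_t$ --- which is exactly where simpliciality of $v$ is needed --- and that the maximal cliques of $G\setminus\{v\}$ pull back to the original cliques $C_k$, so the recursively obtained blocks can be repackaged as admissible $Q_k$. Isolating the degenerate case $Q_{vv}=0$ keeps the argument elementary and avoids pseudo-inverses or limiting arguments.
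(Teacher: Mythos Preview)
The paper does not prove this theorem; it is quoted from \cite{agler} as background, with no argument given (the reader is also referred to the survey \cite{va}). So there is no proof in the paper to compare against.

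Your proposal is correct and is in fact the standard proof of the Agler--Helton--McCullough--Rodman decomposition. The ``if'' direction is immediate as you say. For ``only if'', the induction on $|V|$ by peeling a simplicial vertex and subtracting the rank-one Schur correction $Q_{vv}^{-1}ww^T$ is exactly the classical argument; the three bookkeeping points you isolate are the right ones and are handled correctly: (i) induced subgraphs of chordal graphs are chordal, so $G\setminus\{v\}$ is chordal; (ii) simpliciality of $v$ forces $w$ to be supported on $C_t=\{v\}\cup N(v)$, so the correction stays inside $C_t\times C_t$ and $M'$ has pattern $G\setminus\{v\}$; (iii) every maximal clique of $G\setminus\{v\}$ is either one of $C_1,\ldots,C_{t-1}$ or equals $N(v)\subseteq C_t$, so the recursively obtained blocks can be padded back into the original $C_k$'s. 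The degenerate case $Q_{vv}=0$ is dispatched correctly via the zero-row/column observation. Nothing is missing.
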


For more details about sparse matrices and chordal graphs, the reader may refer to \cite{va}.


\subsection{Approximating the joint spectral radius via SOS relaxations}
The joint spectral radius (JSR) for a set of matrices $\AA=\{A_1,\ldots,A_m\}\subseteq\R^{n\times n}$ is given by 
\begin{equation}
\rho(\AA):=\lim_{k\rightarrow\infty}\max_{\sigma\in\{1,\ldots,m\}^k}||A_{\sigma_1}A_{\sigma_2}\cdots A_{\sigma_k}||^{\frac{1}{k}} \,.
\end{equation}
Parrilo and Jadbabaie proposed to compute a sequence of upper bounds for $\rho(\AA)$ via SOS relaxations. The core idea is based on the following theorem.
\begin{theorem}[\cite{parrilo}, Theorem 2.2]\label{sec2-thm1}
Given a set of matrices $\AA=\{A_1,\ldots,A_m\}\subseteq\R^{n\times n}$, let $p$ be a strictly positive form of degree $2d$ that satisfies
\begin{equation*}
    p(A_i\x)\le\gamma^{2d}p(\x),\quad\forall\x\in\R^n,\quad i=1,\ldots,m.
\end{equation*}
Then, $\rho(\AA)\le\gamma$.
\end{theorem}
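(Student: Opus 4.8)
The plan is to establish the bound $\rho(\AA)\le\gamma$ directly from the definition \eqref{jsr}, using the homogeneity of $p$ and the submultiplicative-type inequality $p(A_i\x)\le\gamma^{2d}p(\x)$ iterated along a product of matrices. First I would normalize: since $p$ is a strictly positive form of degree $2d$, it attains a positive minimum $a$ and a finite maximum $b$ on the unit sphere $S^{n-1}$, so that $a\|\x\|^{2d}\le p(\x)\le b\|\x\|^{2d}$ for all $\x\in\R^n$, with $0<a\le b<\infty$. This sandwiching is the key device that converts statements about $p$ into statements about the norm $\|\cdot\|$ appearing in the JSR definition.

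Next I would iterate the hypothesis. Fix $k$ and a word $\sigma\in\{1,\ldots,m\}^k$, and write $M_\sigma:=A_{\sigma_1}A_{\sigma_2}\cdots A_{\sigma_k}$. Applying $p(A_i\x)\le\gamma^{2d}p(\x)$ successively with $\x$ replaced by $A_{\sigma_2}\cdots A_{\sigma_k}\x$, then $A_{\sigma_3}\cdots A_{\sigma_k}\x$, and so on, yields $p(M_\sigma\x)\le\gamma^{2dk}p(\x)$ for every $\x\in\R^n$. Combining this with the two-sided bound, for any unit vector $\x$ we get $a\|M_\sigma\x\|^{2d}\le p(M_\sigma\x)\le\gamma^{2dk}p(\x)\le b\gamma^{2dk}$, hence $\|M_\sigma\x\|\le (b/a)^{1/(2d)}\gamma^{k}$. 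Taking the supremum over unit $\x$ gives $\|M_\sigma\|\le (b/a)^{1/(2d)}\gamma^{k}$, and then the maximum over all words $\sigma$ of length $k$ gives $\max_{\sigma}\|M_\sigma\|\le (b/a)^{1/(2d)}\gamma^{k}$.

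Finally I would take $k$-th roots and pass to the limit: $\max_{\sigma}\|M_\sigma\|^{1/k}\le (b/a)^{1/(2dk)}\gamma$, and since $(b/a)^{1/(2dk)}\to1$ as $k\to\infty$, the definition \eqref{jsr} yields $\rho(\AA)\le\gamma$. One should note that $\rho(\AA)$ is independent of the chosen norm, so using the operator norm induced by the Euclidean norm on $\R^n$ is legitimate; alternatively, if $\gamma\le 0$ the hypothesis with $\x$ on the unit sphere forces $p\equiv 0$ there, contradicting strict positivity unless $m=0$, so we may assume $\gamma>0$ and the limit argument makes sense.

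The only real subtlety — and the step I would be most careful about — is the compactness argument giving the constants $a,b$: it relies precisely on $p$ being a \emph{form} (so that its values on all of $\R^n$ are controlled by its values on the compact sphere via homogeneity) and on $p$ being \emph{strictly} positive (so that $a>0$, which is what prevents the constant $(b/a)^{1/(2dk)}$ from blowing up). If $p$ merely vanished somewhere nonzero, the chain $a\|M_\sigma\x\|^{2d}\le p(M_\sigma\x)$ would break down. Everything else is a routine iteration and a limit, so I expect no further obstacle.
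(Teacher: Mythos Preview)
The paper does not supply its own proof of this statement; it is quoted verbatim from \cite{parrilo}. Your argument is correct and is essentially the standard one found there: use strict positivity and homogeneity of $p$ together with compactness of the sphere to sandwich $a\|\x\|^{2d}\le p(\x)\le b\|\x\|^{2d}$, iterate the hypothesis along a length-$k$ word to get $p(M_\sigma\x)\le\gamma^{2dk}p(\x)$, convert to a norm bound, take $k$-th roots, and let $k\to\infty$.

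One minor remark: your aside about $\gamma\le0$ is not quite right. For $\gamma<0$ the exponent $2d$ is even, so $\gamma^{2d}=|\gamma|^{2d}>0$ and the hypothesis does \emph{not} force $p$ to vanish on the sphere; rather, the hypothesis is then equivalent to the one with $|\gamma|$, and your argument yields $\rho(\AA)\le|\gamma|$, not $\rho(\AA)\le\gamma$. In other words, for $\gamma<0$ the theorem as literally stated can fail (take any $\AA$ with $\rho(\AA)>0$), so one should simply assume $\gamma\ge0$ from the outset, which is how the result is used throughout the paper. This does not affect the substance of your proof.
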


Replacing positive forms by more tractable SOS forms, Theorem \ref{sec2-thm1} immediately suggests the following SOS relaxations indexed by $d\in\N\backslash\{0\}$ to compute a sequence of upper bounds for $\rho(\AA)$: 
\begin{align}\label{densesos}
    \rho_{\textrm{SOS},2d}(\AA):=&\inf_{p\in\R[\x]_{2d},\gamma}  \gamma \\
    & \textrm{ s.t. } 
    \begin{cases}p(\x)-||\x||_2^{2d}\in\Sigma_{n,2d},\\
    \gamma^{2d}p(\x)-p(A_i\x)\in\Sigma_{n,2d},\ 1\leq i \leq m.
    \end{cases} \nonumber
\end{align}

The terms ``$||\x||_2^{2d}$" is added to make sure $p$ is strictly positive.
The optimization problem \eqref{densesos} can be solved via SDP by bisection on $\gamma$. It was shown in \cite{parrilo} that the upper bound $\rho_{\textrm{SOS},2d}(\AA)$ satisfies the following theorem.
\begin{theorem}[\cite{parrilo}]\label{sec2-thm2}
Let $\AA=\{A_1,\ldots,A_m\}\subseteq\R^{n\times n}$. For any integer $d\ge1$, one has $m^{-\frac{1}{2d}}\rho_{\textrm{SOS},2d}(\AA)\le\rho(\AA)\le\rho_{\textrm{SOS},2d}(\AA)$.
\end{theorem}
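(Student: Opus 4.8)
The plan is to establish the two inequalities separately. The upper bound $\rho(\AA)\le\rho_{\textrm{SOS},2d}(\AA)$ is essentially a corollary of Theorem \ref{sec2-thm1}: given any feasible point $(p,\gamma)$ of \eqref{densesos}, the two SOS constraints imply that $p$ is strictly positive (since $p(\x)\ge\|\x\|_2^{2d}>0$ for $\x\neq 0$) and that $\gamma^{2d}p(\x)-p(A_i\x)$ is a nonnegative form, i.e.\ $p(A_i\x)\le\gamma^{2d}p(\x)$ for all $\x$ and all $i$. Theorem \ref{sec2-thm1} then yields $\rho(\AA)\le\gamma$, and taking the infimum over feasible $\gamma$ gives $\rho(\AA)\le\rho_{\textrm{SOS},2d}(\AA)$.

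For the lower bound $m^{-\frac1{2d}}\rho_{\textrm{SOS},2d}(\AA)\le\rho(\AA)$, the idea is to construct, from the JSR itself, a near-feasible form for \eqref{densesos} whose associated $\gamma$ is not much larger than $\rho(\AA)$. Fix $\varepsilon>0$ and let $\lambda:=\rho(\AA)+\varepsilon$. By the definition \eqref{jsr} of the JSR (and the fact that the limit equals the infimum over $k$ of $\max_\sigma\|A_{\sigma_1}\cdots A_{\sigma_k}\|^{1/k}$ for a submultiplicative norm), after rescaling each $A_i$ by $1/\lambda$ we may assume $\rho(\AA)<1$ and all sufficiently long products of the rescaled matrices have norm bounded by $1$. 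More precisely, there is an $N$ such that $\|A_{\sigma_1}\cdots A_{\sigma_N}\|\le 1$ for every word $\sigma$ of length $N$. Now define the degree-$2d$ form
\begin{equation*}
p(\x):=\sum_{k=0}^{N-1}\ \sum_{\sigma\in\{1,\ldots,m\}^k}\ \|A_{\sigma_1}\cdots A_{\sigma_k}\x\|_2^{2d},
\end{equation*}
which is a finite sum of $2d$-th powers of Euclidean norms of linear forms, hence an SOS form, and is strictly positive because the $k=0$ term is $\|\x\|_2^{2d}$. One checks that $p(A_i\x)=\sum_{k=1}^{N}\sum_{\sigma\in\{1,\ldots,m\}^{k},\ \sigma_1=i}\|A_{\sigma_1}\cdots A_{\sigma_k}\x\|_2^{2d}$; comparing with $p(\x)$ term by term, the $k\le N-1$ contributions are dominated by $p(\x)$, while the new $k=N$ terms are bounded using $\|A_{\sigma_1}\cdots A_{\sigma_N}\x\|_2\le\|\x\|_2$. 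Summing over the at most $m^{N-1}$ such words and bounding crudely gives $p(A_i\x)\le C\,p(\x)$ for an explicit constant; the key point is that a careful accounting (grouping the $k=N$ terms against the $k=N-1$ layer of $p$, which differ only by one extra matrix factor whose operator norm is controlled) produces $p(A_i\x)\le m\cdot p(\x)$, so that $\gamma^{2d}=m$ works for the rescaled matrices. Undoing the rescaling by $\lambda$, this exhibits a feasible point of \eqref{densesos} with $\gamma=m^{1/2d}(\rho(\AA)+\varepsilon)$, whence $\rho_{\textrm{SOS},2d}(\AA)\le m^{1/2d}(\rho(\AA)+\varepsilon)$; letting $\varepsilon\to 0$ finishes the argument.

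The main obstacle is the second inequality: one must produce an honest SOS (not merely positive) feasible form and track constants carefully enough to land the factor $m^{1/2d}$ rather than something weaker. The delicate step is the term-by-term comparison of $p(A_i\x)$ with $p(\x)$ — ensuring the freshly created length-$N$ words are each absorbed by exactly one length-$(N-1)$ word of $p$ with only a bounded multiplicity $m$, and that the finitely many lower-order layers cause no trouble — and then verifying that the homogeneity degree $2d$ is exactly what converts the norm bound $\le 1$ into the clean multiplicative constant $m$. (This is the content of the proof in \cite{parrilo}; the construction above is the standard one.)
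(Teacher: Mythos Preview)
The paper does not give its own proof of Theorem~\ref{sec2-thm2}; it is quoted from \cite{parrilo}. So there is no ``paper's proof'' to compare against, and your proposal must be judged on its own.

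Your upper-bound argument is fine. The lower-bound construction, however, has a genuine gap. With the unweighted form
\[
p(\x)=\sum_{k=0}^{N-1}\sum_{|\sigma|=k}\|A_{\sigma}\x\|_2^{2d},
\]
the claimed inequality $p(A_i\x)\le m\,p(\x)$ does \emph{not} follow from the accounting you sketch. Splitting $p(A_i\x)$ by word length, the layers $\ell=1,\dots,N-1$ are indeed a subset of the terms of $p(\x)$, but the new length-$N$ layer contributes $\sum_{|\sigma|=N-1}\|A_\sigma A_i\x\|_2^{2d}\le m^{N-1}\|\x\|_2^{2d}$, and this can only be absorbed into $(m-1)p(\x)$ if $p(\x)\ge\frac{m^{N-1}-1}{m-1}\|\x\|_2^{2d}$, which you have no reason to expect. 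Your hint ``group the $k=N$ terms against the $k=N{-}1$ layer'' does not work either: you would need $\|A_\sigma A_i\x\|_2\le C\|A_\sigma\x\|_2$ with $C$ independent of $\sigma$, which is not available.

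The standard remedy is to insert geometric weights: take
\[
p(\x)=\sum_{k=0}^{N-1} m^{-k}\sum_{|\sigma|=k}\|A_{\sigma}\x\|_2^{2d}.
\]
A term-by-term comparison then gives
\[
m\,p(\x)-p(A_i\x)=(m-1)\|\x\|_2^{2d}+\tfrac{1}{m^{N-1}}\!\!\sum_{\substack{|\tau|=N\\ \tau_N=i}}\!\bigl(\|\x\|_2^{2d}-\|A_\tau\x\|_2^{2d}\bigr)
+\sum_{\ell=1}^{N-1}\tfrac{1}{m^{\ell-1}}\!\!\sum_{\substack{|\tau|=\ell\\ \tau_\ell\neq i}}\!\|A_\tau\x\|_2^{2d},
\]
and each bracket is SOS because $\|\x\|_2^{2d}-\|A\x\|_2^{2d}=(\|\x\|_2^2-\|A\x\|_2^2)\sum_{j=0}^{d-1}\|\x\|_2^{2j}\|A\x\|_2^{2(d-1-j)}$ is a product of SOS forms whenever $\|A\|\le 1$. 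This also addresses a second point you glossed over: the feasibility constraints in \eqref{densesos} demand SOS, not mere nonnegativity, so you must check that $m\,p(\x)-p(A_i\x)$ is SOS, which the weighted construction delivers explicitly. (Incidentally, in your displayed expression for $p(A_i\x)$ the constraint should be $\sigma_k=i$, not $\sigma_1=i$.) The argument in \cite{parrilo} is organized differently---via the $d$-fold symmetric lift $A_i\mapsto A_i^{[d]}$, reducing to the quadratic case---but your route is perfectly viable once the weights are in place.
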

It is immediate from Theorem \ref{sec2-thm2} that $\{\rho_{\textrm{SOS},2d}(\AA)\}_{d\ge1}$ converges to $\rho(\AA)$ when $d$ increases.

\section{Sparse SOS Decompositions}
\label{sec:sparsesos}
Deciding whether a form $f$ is SOS involves solving an SDP whose size scales combinatorially with the number of variables and the degree of $f$. When $f$ is sparse, it is possible to exploit the sparsity to construct an SDP of smaller size in order to reduce the computational burden. This includes two aspects: generating a smaller monomial basis and exploiting block structures for Gram matrices.

\subsection{Generating a smaller monomial basis}
Given a polynomial $f\in\R[\x]$, the \emph{Newton polytope} of $f$ is the convex hull of the support of $f$.
It is known that the standard monomial basis $\N^n_{d}$ used in \eqref{gram} can be replaced by the integer points in half of the Newton polytope of $f$, i.e., by
\begin{equation}\label{sec3-eq1}
\B=\frac{1}{2}\New(f)\cap\N^n\subseteq\N^n_{d}.
\end{equation}
See, e.g., \cite{re} for a proof.

In \cite{wang3}, an algorithm named ${\tt GenerateBasis}$ was proposed to generate a smaller monomial basis for \eqref{gram} than the one provided by the Newton polytope.
Given the support of $f$, the output of ${\tt GenerateBasis}$ is an increasing chain of monomial sets:
$$\B_1\subseteq\B_2\subseteq\B_3\subseteq\cdots\subseteq\N^n_{d}.$$
Each $\B_p$ can serve as a candidate monomial basis. In practice, if indexing the unknown Gram matrix from \eqref{gram} by $\B_p$ leads to an infeasible SDP, then we turn to $\B_{p+1}$ until a feasible SDP is retrieved. In many cases, the algorithm ${\tt GenerateBasis}$ can provide a monomial basis smaller than the one given by \eqref{sec3-eq1}; see \cite{wang3} for such examples.
\begin{remark}
For all tested examples, $\B_1$ is a suitable monomial basis, but we do not know if this is true in general.
\end{remark}
\subsection{Term sparsity patterns}
To derive a block structure for Gram matrices, we recall the concept of term sparsity patterns \cite{wang,wang2,wang3}.
\begin{definition}
Let $f(\x)\in\R[\x]$ with $\supp(f)=\A$. Assume that $\B$ is a monomial basis. The {\em term sparsity pattern graph} $G(V,E)$ of $f$ is defined by $V=\B$ and
\begin{equation}\label{tsp}
    E=\{\{\b,\g\}\mid \b,\g \in V,\,\b\ne\g,\,\b+\g\in\A\cup2\B\},
\end{equation}
where $2\B=\{2\b\mid\b\in\B\}$.
\end{definition}
For a term sparsity pattern graph $G(V,E)$, we denote a chordal extension of $G$ by $\overline{G}(V,\overline{E})$.

\begin{example}\label{ex1}
Consider the polynomial $f=x_1^4+x_2^4+x_3^4+x_1x_2x_3^2+x_1x_2^2x_3$.
A monomial basis for $f$ is $\{x_1^2,x_2^2,x_3^2,x_1x_2,x_1x_3,x_2x_3\}$. See Figure \ref{chordal} for the term sparsity pattern graph $G$ of $f$ and a chordal extension $\overline{G}$ of $G$.
\begin{figure}[htbp]
\begin{center}
{\tiny
\begin{tikzpicture}[every node/.style={circle, draw=black!50, thick, minimum size=7.5mm}]
\node (n2) at (90:1.5) {$x_1^2$};
\node (n3) at (30:1.5) {$x_3^2$};
\node (n4) at (330:1.5) {$x_1x_2$};
\node (n5) at (270:1.5) {$x_2x_3$};
\node (n6) at (210:1.5) {$x_1x_3$};
\node (n1) at (150:1.5) {$x_2^2$};
\draw (n2)--(n3);
\draw (n1)--(n3);
\draw[dashed] (n3)--(n5);
\draw[dashed] (n3)--(n6);
\draw (n3)--(n4);
\draw (n4)--(n5);
\draw (n5)--(n6);
\draw (n6)--(n1);
\draw (n1)--(n2);
\end{tikzpicture}}\\
\end{center}
\caption{The term sparsity pattern graph and a chordal extension for Example \ref{ex1}. The dashed edges are added after a chordal extension.}\label{chordal}
\end{figure}
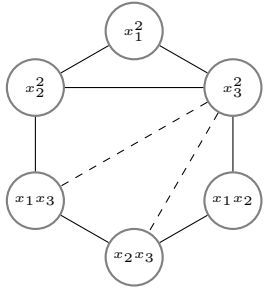 
\end{example}

Given a sparse SOS form $f(\x)\in\R[\A]$ and a monomial basis $\B$, generally a Gram matrix for $f$ is not necessarily sparse. Let $G$ be the term sparsity pattern graph of $f$ and $\overline{G}$ a chordal extension of $G$. To get a sparse SOS decomposition of $f$, we then impose the sparsity pattern $\overline{G}$ to the Gram matrix for $f$, i.e., we consider the following subset of SOS forms in  $\Sigma_{n,2d}$:
\begin{equation*}
\Sigma_{\A}:=\{f\in\R[\A]\mid\exists Q\in\mathbf{S}_+^{|\B|}\cap\mathbf{S}_{\overline{G}}\,\textrm{ s.t. }f=(\x^{\B})^TQ\x^{\B}\}.
\end{equation*}
Theorem \ref{sec2-thm} enables us to give the following sparse SOS decompositions for polynomials in $\Sigma_{\A}$.
\begin{theorem}[\cite{wang}, Theorem 3.3]\label{sec3-thm1}
Given $\A\subseteq\N^n$, assume that $\B=\{\b_1,\ldots,\b_r\}$ is a monomial basis and $G$ is the term sparsity pattern graph. Let $C_1, C_2, \ldots, C_t\subseteq V$ denote the list of maximal cliques of $\overline{G}$ (a chordal extension of $G$) and $\B_k=\{\b_i\in\B\mid i\in C_k\}, k=1,2,\ldots,t$. Then, $f(\x)\in\Sigma_{\A}$ if and only if there exist $f_k(\x)=(\x^{\B_k})^TQ_k\x^{\B_k}$ with $Q_k\in\mathbf{S}_+^{|C_k|}$ for $k=1,\ldots,t$ such that
\begin{equation}\label{sec3-eq9}
f(\x)=\sum_{k=1}^tf_k(\x).
\end{equation}
\end{theorem}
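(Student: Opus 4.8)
The plan is to deduce Theorem~\ref{sec3-thm1} from the matrix-cone decomposition in Theorem~\ref{sec2-thm} by translating back and forth between the Gram-matrix picture and the polynomial picture. First I would spell out the correspondence between the graph $G$ on the monomial set $\B$ and the sparsity pattern of a Gram matrix indexed by $\B$: a symmetric matrix $Q$ indexed by $\B$ lies in $\mathbf{S}_G$ precisely when $Q_{\b\g}=0$ for all $\b\ne\g$ with $\b+\g\notin\A\cup 2\B$, which is exactly the defining condition \eqref{tsp} of the edge set $E$. The key observation making the reduction legitimate is that imposing the pattern $\overline{G}$ (rather than $G$) on $Q$ loses no polynomials: if $Q\in\mathbf{S}_G$ then $(\x^\B)^TQ\x^\B\in\R[\A]$, and conversely any $Q$ supported on the extra edges $\overline{E}\setminus E$ can still contribute only monomials $\b+\g$ with $\b+\g\in 2\B$ (even-degree squares) — but more to the point, $\Sigma_\A$ is \emph{defined} using $\overline{G}$, so there is nothing to check there. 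The substantive content is therefore purely about matrices supported on the chordal graph $\overline{G}$.

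Next I would invoke Theorem~\ref{sec2-thm} directly. Given $f\in\Sigma_\A$, by definition there is $Q\in\mathbf{S}_+^{|\B|}\cap\mathbf{S}_{\overline{G}}$ with $f=(\x^\B)^TQ\x^\B$. Since $\overline{G}$ is chordal with maximal cliques $C_1,\dots,C_t$, Theorem~\ref{sec2-thm} gives PSD matrices $Q_k\in\mathbf{S}_+^{|C_k|}$ with $Q=\sum_{k=1}^t P_{C_k}^TQ_kP_{C_k}$. Substituting this into the Gram form and using $P_{C_k}\x^\B=\x^{\B_k}$ (which is just the definition of the indexing matrix $P_{C_k}$ in \eqref{sec2-eq6}, now with nodes being monomials), one gets
\[
f(\x)=(\x^\B)^T\Big(\sum_{k=1}^t P_{C_k}^TQ_kP_{C_k}\Big)\x^\B=\sum_{k=1}^t (\x^{\B_k})^TQ_k\x^{\B_k},
\]
which is exactly \eqref{sec3-eq9} with $f_k(\x)=(\x^{\B_k})^TQ_k\x^{\B_k}$. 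For the converse, if $f=\sum_k f_k$ with each $f_k=(\x^{\B_k})^TQ_k\x^{\B_k}$ and $Q_k\succeq 0$, then $Q:=\sum_k P_{C_k}^TQ_kP_{C_k}$ is PSD (a sum of PSD matrices, after inflation) and supported on $\overline{G}$ since each summand is supported on the clique $C_k\subseteq\overline{G}$; reversing the same computation shows $f=(\x^\B)^TQ\x^\B$, and also $f\in\R[\A]$ because each $f_k$ only involves monomials $\b_i+\b_j$ with $i,j\in C_k$, hence edges or loops of $\overline{G}$, hence exponents in $\A\cup 2\B$. Thus $f\in\Sigma_\A$.

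The only real subtlety — and the step I would be most careful about — is the bookkeeping identifying the monomial subset $\B_k=\{\b_i\mid i\in C_k\}$ with the index set on which $Q_k$ acts, i.e.\ checking that $P_{C_k}\x^\B$ really equals $\x^{\B_k}$ under whatever fixed ordering on $\N^n$ was chosen, so that the principal-submatrix/inflation operations $Q_C=P_CQP_C^T$ and $Q=P_C^TQ_CP_C$ of Section~\ref{sec:background} translate verbatim to monomial bases. This is notational rather than conceptual: once one agrees that the graph $\overline{G}$ has its nodes labeled by the monomials of $\B$ (as stated in the definition), the whole argument is a direct specialization of Theorem~\ref{sec2-thm}, and no new estimate or combinatorial fact is needed.
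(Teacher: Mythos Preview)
Your approach is exactly the one the paper intends: the paper does not supply its own proof of Theorem~\ref{sec3-thm1} but cites \cite{wang} and prefaces the statement with ``Theorem~\ref{sec2-thm} enables us to give the following sparse SOS decompositions,'' i.e.\ the result is meant as a direct translation of the Agler--Helton--McCullough--Rodman clique decomposition into the Gram-matrix/polynomial language, which is precisely what you carry out via $P_{C_k}\x^{\B}=\x^{\B_k}$.

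One small slip in your converse direction: you argue that $f\in\R[\A]$ because each $f_k$ involves only monomials $\b_i+\b_j$ with $i,j\in C_k$, ``hence edges or loops of $\overline{G}$, hence exponents in $\A\cup 2\B$.'' But edges added by the chordal extension, i.e.\ pairs $\{\b,\g\}\in\overline{E}\setminus E$, correspond by the very definition of $E$ in \eqref{tsp} to exponents $\b+\g\notin\A\cup 2\B$, so an individual $f_k$ need not lie in $\R[\A]$. The membership $f\in\R[\A]$ should therefore be read as a standing hypothesis on $f$ (it is part of the definition of $\Sigma_{\A}$), not as a consequence of the decomposition; with that understood, your argument is complete.
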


By virtue of Theorem \ref{sec3-thm1}, checking membership in $\Sigma_{\A}$ boils down to solving an SDP problem involving PSD matrices of small sizes if each maximal clique of $\overline{G}$ has a small size relative to the original matrix. This might significantly reduce the overall computational cost.

\section{The SparseJSR Algorithm}
\label{sec:sparsejsr}
In this section, we propose an algorithm for bounding JSR based on the sparse SOS decomposition discussed in the previous section. To this end, we first establish a hierarchy of sparse supports for the auxiliary form $p(\x)$ used in the SOS program \eqref{densesos}.

Let $\AA=\{A_1,\ldots,A_m\}\subseteq\R^{n\times n}$ be a tuple of matrices. Fixing a relaxation order $d$, let $p_0(\x)=\sum_{j=1}^nc_jx_j^{2d}$ with random coefficients $c_{j}\in (0,1)$ and let $\A^{(0)}=\supp(p_0)$. Then for $s\in\N\backslash\{0\}$, we iteratively define
\begin{equation}\label{supp}
    \A^{(s)}:=\A^{(s-1)}\cup\bigcup_{i=1}^m\supp(p_{s-1}(A_i\x)),
\end{equation}
where $p_{s-1}(\x)=\sum_{\a\in\A^{(s-1)}}c_{\a}\x^{\a}$ with random coefficients $c_{\a}\in (0,1)$. Note that the particular form of $p_0(\x)$ is chosen such that $\A^{(s)}$ contains all possible homogeneous monomials of degree $2d$ that are ``compatible" with the couplings between variables $x_1,\ldots, x_n$ introduced by the mappings $\x\mapsto A_i\x$ for all $i$. It is clear that
\begin{equation}\label{hsupp}
    \A^{(1)}\subseteq\cdots\subseteq\A^{(s)}\subseteq\A^{(s+1)}\subseteq\cdots\subseteq\N^n_{2d}
\end{equation}
and the sequence $\{\A^{(s)}\}_{s\ge1}$ stabilizes in finitely many steps. We point out that it is not guaranteed a hierarchy of sparse supports is always retrieved in \eqref{hsupp} even if all $A_i$ are sparse. For instance,
if some matrix $A_i\in\AA$ has a fully dense row, then by definition, one immediately has $\A^{(1)}=\N^n_{2d}$. In this case, the sparsity of $\AA$ cannot be exploited by the present method. This obstacle might be overcome if a more suitable $p_0(\x)$ is chosen taking into account the sparsity pattern of $\AA$, which we leave for future investigation. 

On the other hand, if the matrices in $\AA$ have some common zero columns, then a hierarchy of sparse supports must be retrieved.

\begin{proposition}\label{prop}
Let $\AA=\{A_1,\ldots,A_m\}\subseteq\R^{n\times n}$ and assume that the matrices in $\AA$ have common zero columns indexed by $J\subseteq[n]:=\{1,2,\ldots,n\}$. Let $\tilde{\N}^{n-|J|}_{2d}:=\{(\alpha_i)_{i\in[n]}\in\N^n\mid(\alpha_i)_{i\in[n]\backslash J}\in\N^{n-|J|}_{2d},\alpha_i=0\textrm{ for }i\in J\}$ and $\mathbf{b}_j:=\{(\alpha_i)_{i\in[n]}\in\N^n\mid\alpha_j=2d,\alpha_i=0\textrm{ for }i\ne j\}$ for $j\in[n]$. Then $\A^{(s)}\subseteq\tilde{\N}^{n-|J|}_{2d}\cup\{\mathbf{b}_j\}_{j\in J}$ for all $s\ge1$. 
\end{proposition}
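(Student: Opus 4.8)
The plan is to prove by induction on $s$ that $\A^{(s)}\subseteq\tilde{\N}^{n-|J|}_{2d}\cup\{\mathbf{b}_j\}_{j\in J}$, exploiting the fact that applying $A_i$ to $\x$ kills every coordinate indexed by $J$. First I would set up the base case: by definition $\A^{(0)}=\supp(p_0)=\{\mathbf{b}_j\mid j\in[n]\}$, which splits as $\{\mathbf{b}_j\mid j\in[n]\backslash J\}\cup\{\mathbf{b}_j\mid j\in J\}$. The first piece lies in $\tilde{\N}^{n-|J|}_{2d}$ (each $\mathbf{b}_j$ with $j\notin J$ has its only nonzero entry at position $j\in[n]\backslash J$ and that entry is $2d$), and the second piece is exactly $\{\mathbf{b}_j\}_{j\in J}$; so $\A^{(0)}\subseteq\tilde{\N}^{n-|J|}_{2d}\cup\{\mathbf{b}_j\}_{j\in J}$ as well. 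Note $\A^{(1)}$ will already lose the $\mathbf{b}_j$ with $j\in J$ coming from $\supp(p_0(A_i\x))$, but they are retained through the union with $\A^{(0)}$ in \eqref{supp}, which is why the $\{\mathbf{b}_j\}_{j\in J}$ term appears in the claimed bound.

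The key step is the inductive one. Assume $\A^{(s-1)}\subseteq\tilde{\N}^{n-|J|}_{2d}\cup\{\mathbf{b}_j\}_{j\in J}$. By \eqref{supp} it suffices to show $\supp\bigl(p_{s-1}(A_i\x)\bigr)\subseteq\tilde{\N}^{n-|J|}_{2d}$ for each $i$, since the leftover $\A^{(s-1)}$ is already contained in the target set by hypothesis. The crucial observation is that the $j$-th coordinate of $A_i\x$, for $j\in J$, is the inner product of the $j$-th \emph{column} of $A_i$ with $\x$ — wait, more precisely $(A_i\x)_k=\sum_\ell (A_i)_{k\ell}x_\ell$, and "common zero columns indexed by $J$" means $(A_i)_{k\ell}=0$ whenever $\ell\in J$; hence each entry $(A_i\x)_k$ depends only on the variables $x_\ell$ with $\ell\in[n]\backslash J$. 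Therefore, for any monomial $\x^{\a}$ with $\a\in\A^{(s-1)}$, the composition $(A_i\x)^{\a}=\prod_k (A_i\x)_k^{\alpha_k}$ is a polynomial in the variables $\{x_\ell\mid \ell\in[n]\backslash J\}$ only, so every monomial appearing in it has zero exponent on all coordinates in $J$. Since $p_{s-1}$ is homogeneous of degree $2d$ and the maps $\x\mapsto A_i\x$ are linear, $p_{s-1}(A_i\x)$ is homogeneous of degree $2d$ too; combining homogeneity of degree $2d$ with the vanishing of all $J$-coordinates gives exactly membership of each such monomial in $\tilde{\N}^{n-|J|}_{2d}$. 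This closes the induction.

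I do not expect a serious obstacle here; the argument is essentially bookkeeping once the right observation (zero columns $\Rightarrow$ $A_i\x$ is free of the variables indexed by $J$) is isolated. The one point requiring a little care is the interplay between the two pieces of the union in \eqref{supp}: one must not try to prove the stronger (false) statement that $\A^{(s)}\subseteq\tilde{\N}^{n-|J|}_{2d}$, since the spurious monomials $\mathbf{b}_j$, $j\in J$, persist forever through the $\A^{(s-1)}$ term; the inductive statement must carry the $\{\mathbf{b}_j\}_{j\in J}$ summand along. A second minor subtlety is confirming that the random coefficients $c_{\a}\in(0,1)$ are genuinely nonzero (so the supports are as described) and that no cancellation shrinks the supports in a way that would affect the argument — but since we only need an \emph{upper} bound on $\A^{(s)}$, possible cancellations are harmless and can be ignored.
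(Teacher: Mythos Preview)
Your proposal is correct and follows essentially the same route as the paper's proof: induction on $s$, with the key observation that zero columns indexed by $J$ force every entry of $A_i\x$ to be a linear combination of $\{x_\ell\}_{\ell\in[n]\setminus J}$, whence $\supp\bigl(p_{s-1}(A_i\x)\bigr)\subseteq\tilde{\N}^{n-|J|}_{2d}$ and the union with $\A^{(s-1)}$ carries along the $\{\mathbf{b}_j\}_{j\in J}$ piece. The paper states the base case and inductive step more tersely, but the logic is identical.
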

\begin{proof}
Let us do induction on $s$. It is obvious that $\A^{(0)}\subseteq\tilde{\N}^{n-|J|}_{2d}\cup\{\mathbf{b}_j\}_{j\in J}$.
Now assume $\A^{(s)}\subseteq\tilde{\N}^{n-|J|}_{2d}\cup\{\mathbf{b}_j\}_{j\in J}$ for some $s\ge0$. Since the variables effectively involved in $p_s(A_j\x)$ are contained in $\{x_i\}_{i\in[n]\backslash J}$, we have $\supp(p_s(A_j\x))\subseteq\tilde{\N}^{n-|J|}_{2d}$ for $j=1,\ldots,m$. This combined with the induction hypothesis yields $\A^{(s+1)}\subseteq\tilde{\N}^{n-|J|}_{2d}\cup\{\mathbf{b}_j\}_{j\in J}$ as desired.
\end{proof}

For each $s\ge1$, by restricting $p(\x)$ to forms with the sparse support $\A^{(s)}$, \eqref{densesos} now reads as
\begin{align}\label{ssos}
    &\inf_{p\in\R[\A^{(s)}],\gamma}\gamma\\
    &\textrm{ s.t. }\begin{cases}p(\x)-||\x||_2^{2d}\in\Sigma_{n,2d},\\
    \gamma^{2d}p(\x)-p(A_i\x)\in\Sigma_{n,2d},\ 1 \leq i \leq m. \nonumber
    \end{cases}
\end{align}

Let $\A_i^{(s)}=\A^{(s)}\cup\supp(p_s(A_i\x)$ for $i=1,\ldots,m$. In order to exploit the sparsity present in \eqref{ssos}, we then replace $\Sigma_{n,2d}$ with $\Sigma_{\A^{(s)}}$ or $\Sigma_{\A_i^{(s)}}$ in \eqref{ssos}. Consequently we obtain a hierarchy of SOS relaxations indexed by $s$ for a fixed $d$:
\begin{align}\label{tssos}
    \rho_{s,2d}(\AA)& :=\inf_{p\in\R[\A^{(s)}],\gamma}\gamma \\
    & \quad\textrm{ s.t. }\begin{cases}p(\x)-||\x||_2^{2d}\in\Sigma_{\A^{(s)}},\\
    \gamma^{2d}p(\x)-p(A_i\x)\in\Sigma_{\A^{(s)}_i},\ 1 \leq i \leq m. \nonumber
    \end{cases}
\end{align}
We call the index $s$ the {\em sparse order} of \eqref{tssos}. As in the dense case, the optimization problem \eqref{tssos} can be solved via SDP by bisection on $\gamma$.
Moreover, we have the following theorem.
\begin{theorem}\label{thm2}
Let $\AA=\{A_1,\ldots,A_m\}\subseteq\R^{n\times n}$. For any integer $d\ge1$, one has $\rho_{\textrm{SOS},2d}(\AA)\le\cdots\le\rho_{s,2d}(\AA)\le\cdots\le\rho_{2,2d}(\AA)\le\rho_{1,2d}(\AA)$.
\end{theorem}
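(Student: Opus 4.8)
The plan is to prove the chain of inequalities in two stages: first the "monotonicity within the sparse hierarchy" part, $\rho_{s+1,2d}(\AA)\le\rho_{s,2d}(\AA)$ for every $s\ge1$, and then the "sparse dominates dense" endpoint, $\rho_{\textrm{SOS},2d}(\AA)\le\rho_{s,2d}(\AA)$. Both are of the same flavor: a feasible point for the more restrictive program can be transported to a feasible point for the less restrictive one with the same objective value $\gamma$, so the infimum can only go down.

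For the monotonicity step, first I would observe that $\A^{(s)}\subseteq\A^{(s+1)}$ by \eqref{hsupp}, hence $\R[\A^{(s)}]\subseteq\R[\A^{(s+1)}]$, so the feasible set of the $p$-variable is enlarged. The real content is the SOS constraints: I need $\Sigma_{\A^{(s)}}\subseteq\Sigma_{\A^{(s+1)}}$ and $\Sigma_{\A^{(s)}_i}\subseteq\Sigma_{\A^{(s+1)}_i}$. This should follow from the definition of $\Sigma_{\A}$ together with the fact that enlarging $\A$ enlarges both the admissible support $\R[\A]$ and the term sparsity pattern graph $G$ (more elements of $\A\cup2\B$ means more edges, hence a supergraph, hence a supergraph after chordal extension, hence a weaker sparsity pattern $\mathbf{S}_{\overline{G}}$ on the Gram matrix). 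One subtlety: the monomial basis $\B$ produced by \texttt{GenerateBasis} may itself change when the support grows from $\A^{(s)}$ to $\A^{(s+1)}$; I would either invoke the chain property $\B_1\subseteq\B_2\subseteq\cdots$ from Section~\ref{sec:sparsesos} to argue the basis only grows, or simply note that a larger basis padded with a zero Gram block still realizes the same polynomial, so no genuine loss occurs. Given a feasible triple for \eqref{tssos} at sparse order $s$, the same $p$ and $\gamma$ are then feasible at order $s+1$, giving $\rho_{s+1,2d}(\AA)\le\rho_{s,2d}(\AA)$.

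For the endpoint inequality $\rho_{\textrm{SOS},2d}(\AA)\le\rho_{s,2d}(\AA)$, take any feasible $(p,\gamma)$ for \eqref{tssos}. By Theorem~\ref{sec3-thm1}, the constraints $p-\|\x\|_2^{2d}\in\Sigma_{\A^{(s)}}$ and $\gamma^{2d}p-p(A_i\x)\in\Sigma_{\A^{(s)}_i}$ each express the relevant form as a sum $\sum_k f_k$ of squares of forms supported on sub-bases $\B_k$; in particular each such form lies in the ordinary SOS cone $\Sigma_{n,2d}$ (via block-diagonal inflation, i.e.\ the $P_{C_k}^TQ_kP_{C_k}$ construction of Theorem~\ref{sec2-thm}). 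Since $p\in\R[\A^{(s)}]\subseteq\R[\x]_{2d}$, the pair $(p,\gamma)$ is feasible for the dense program \eqref{densesos}, so $\rho_{\textrm{SOS},2d}(\AA)\le\gamma$; taking the infimum over feasible $(p,\gamma)$ gives the claim. Chaining the two stages yields the full inequality $\rho_{\textrm{SOS},2d}(\AA)\le\cdots\le\rho_{s,2d}(\AA)\le\cdots\le\rho_{2,2d}(\AA)\le\rho_{1,2d}(\AA)$.

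The main obstacle I anticipate is purely bookkeeping rather than conceptual: carefully tracking how the term sparsity pattern graph and its chordal extension behave under enlargement of the support set $\A$, and confirming that the monomial basis chosen in each relaxation is compatible across levels (or, failing strict nesting of bases, that zero-padding a Gram matrix to a common larger basis does no harm). Once the graph-inclusion/basis-compatibility facts are in place, the rest is the standard "restrict the feasible set, infimum goes up" argument, which is routine. No convergence or closedness issues arise here since we only compare infima, not attainment.
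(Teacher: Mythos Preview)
Your proposal is correct and follows exactly the same route as the paper: show that the feasible set of \eqref{tssos} at sparse order $s$ is contained in that at order $s+1$, which in turn sits inside the feasible set of the dense program \eqref{densesos}, and conclude that the infima are ordered accordingly. The paper's own proof is a two-sentence version of precisely this argument; the bookkeeping concerns you raise about monomial-basis growth and chordal-extension compatibility are not addressed there either and are simply absorbed into the phrase ``it is clear that.''
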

\begin{proof}
For any fixed $d\in\N\backslash\{0\}$, because of \eqref{hsupp}, it is clear that the feasible set of \eqref{tssos} with the sparse order $s$ is contained in the feasible set of \eqref{tssos} with the sparse order $s+1$, which is in turn contained in the feasible set of \eqref{densesos}. This yields the desired conclusion.
\end{proof}

So we can propose the algorithm {\tt SparseJSR} that computes a non-increasing sequence of upper bounds for the JSR of a tuple of matrices via solving \eqref{tssos} for any fixed $d$. By varying the relaxation order $d$ and the sparse order $s$, {\tt SparseJSR} offers a trade-off between the computational cost and the quality of the obtained upper bound. The correctness of {\tt SparseJSR} is guaranteed by Theorem \ref{sec2-thm1} and Theorem \ref{thm2}.



\section{Numerical Experiments}
\label{sec:benchs}
In this section, we present numerical experiments for the proposed algorithm {\tt SparseJSR}, which is implemented in the Julia package also named \href{https://github.com/wangjie212/SparseJSR}{\tt SparseJSR} and based on the \href{https://github.com/wangjie212/TSSOS}{\tt TSSOS} package used in \cite{wang2,wang3,wang4}. {\tt SparseJSR} utilizes the Julia packages {\tt LightGraphs} \cite{graph} to handle graphs, {\tt ChordalGraph} \cite{Wang20} to generate chordal extensions and {\tt JuMP} \cite{jump} to model SDP. 
Finally, {\tt SparseJSR} relies on the SDP solver {\tt MOSEK} \cite{mosek} to solve SDP.
For the comparison purpose, we also implement the dense SOS relaxation \eqref{densesos} in {\tt SparseJSR} using the same SDP solver {\tt MOSEK}. 
%
For all examples, the sparse order $s$ is set as $1$, the tolerance for bisection is set as $\epsilon=1\times10^{-5}$, and the initial interval for bisection is set as $[0,2]$. To measure the quality of upper bounds that we obtain, a lower bound for JSR is also computed using the MATLAB {\tt JSR} toolbox \cite{jsrtoolbox}. All examples were computed on an Intel Core i5-8265U@1.60GHz CPU with 8GB RAM memory.
The notations that we use are listed in Table \ref{table1}.
\begin{table}[htbp]
\caption{The notations}\label{table1}
\begin{center}
\begin{tabular}{|c|c|}
\hline
$m$&the number of matrices in $\AA$\\
\hline
$n$&the size of matrices in $\AA$\\
\hline
$lb$&lower bounds for JSR given by the {\tt JSR} toolbox\\
\hline
$ub$&upper bounds for JSR given by {\tt SparseJSR}\\
\hline
$d$&the relaxation order\\
\hline
$mb$&the maximal size of PSD blocks\\
\hline
time&running time in seconds\\
\hline
-&$>3600\,$s\\
\hline
$*$&an out of memory error\\
\hline
\end{tabular}
\end{center}
\end{table}

We consider randomly generated examples and examples arising from the study of deadline hit/miss in \cite{maggio2020control}.

\subsection{Randomly generated examples}
We generate random sparse matrices as follows\footnote{Available at https://wangjie212.github.io/jiewang/code.html.}: first call the function ``erdos\_renyi" in the Julia packages {\tt LightGraphs} to generate a random directed graph $G$ with $n$ nodes and $n+10$ edges; for each edge $(i,j)$ of $G$, put a random number in $[-1,1]$ on the position $(i,j)$ of the matrix and put zeros for other positions. We compute an upper bound of the JSR for pairs of such matrices with different sparsity patterns using the first-order SOS relaxations. The results are displayed in Table \ref{random}. It is evident that the sparse approach is much more efficient than the dense approach. 
For instance, the dense approach takes over $3600\,$s when the size of matrices is greater than $100$ while the sparse approach can easily handle matrices of size $120$ within $12\,$s. Both the dense approach and the sparse approach produce upper bounds which are within $0.05$ greater than the corresponding lower bounds.


\begin{table}[htbp]
\caption{Randomly generated examples with $d=1$ and $m = 2$}\label{random}
\rowcolors{2}{white}{gray!25}
\begin{center}
\begin{tabular}{cccccccc}
\hline
\rowcolor{gray!50}
&&\multicolumn{3}{c}{Sparse ($d=1$)}&\multicolumn{3}{c}{Dense ($d=1$)}\\
\rowcolor{gray!50}
\multirow{-2}*{$n$}&\multirow{-2}*{$lb$}&time&$ub$&$mb$&time&$ub$&$mb$\\
\hline
$20$&$0.7894$&$0.74$&$0.8192$&$10$&$1.88$&$0.7967$&$20$\\
\hline
$30$&$0.8502$&$1.65$&$0.8666$&$10$&$7.79$&$0.8523$&$30$\\
\hline
$40$&$0.9446$&$2.68$&$0.9446$&$14$&$25.6$&$0.9446$&$40$\\
\hline
$50$&$0.8838$&$2.97$&$0.9102$&$14$&$55.9$&$0.8838$&$50$\\
\hline
$60$&$0.7612$&$3.64$&$0.7843$&$13$&$171$&$0.7612$&$60$\\
\hline
$70$&$0.9629$&$4.35$&$0.9629$&$11$&$308$&$0.9629$&$70$\\
\hline
$80$&$0.9345$&$5.95$&$0.9399$&$15$&$743$&$0.9345$&$80$\\
\hline
$90$&$0.8020$&$6.27$&$0.8465$&$14$&$1282$&$0.8020$&$90$\\
\hline
$100$&$0.8642$&$8.15$&$0.9132$&$13$&$2568$&$0.8659$&$100$\\
\hline
$110$&$0.8355$&$9.59$&$0.8839$&$15$&-&-&-\\
\hline
$120$&$0.7483$&$11.7$&$0.7735$&$16$&-&-&-\\
\hline
\end{tabular}
\end{center}
\end{table}

\subsection{Examples from control systems}

Here we consider examples from \cite{maggio2020control}, where the dynamics of closed-loop systems are given {by the combination of a plant and a one-step delay controller that stabilizes the plant. The closed-loop system evolves according to either a completed or a missed computation. In the case of a deadline hit, the closed-loop state matrix is $A_H$. In the case of a deadline miss, the associated closed-loop state matrix is $A_M$. The computational platform (hardware and software) ensures that no more than $m-1$ deadlines are missed consecutively.}
The set of possible realisations $\AA$ of such a system contains either a single hit or at most $m-1$ misses followed by a hit, namely $\AA := \{A_H A_M^i \mid 0 \leq  i \leq m-1 \}$.
Then, the closed-loop system that can switch between the realisations included in $\AA$  is asymptotically stable if and only if $\rho (\AA) < 1$. {This gives an indication for scheduling and control co-design, in which the hardware and software platform must guarantee that the maximum number of deadlines missed consecutively does not interfere with stability requirements.}

In Table \ref{control1} and Table \ref{control2}, we report the results obtained for various control systems with $n$ states, under $m-1$ deadline misses, by applying the dense and sparse relaxations with relaxation orders $d=1$ and $d=2$, respectively.
The examples are randomly generated, i.e., our script generates a
random system and then tries to control it\footnote{Available at https://wangjie212.github.io/jiewang/code.html.}. 

In Table \ref{control1}, we fix $m=5$ and vary $n$ from $20$ to $110$. For these examples, surprisingly the dense and sparse approaches with the relaxation order $d=1$ always produce the same upper bounds. As we can see from the table, the sparse approach is more scalable and efficient than the dense one.

In Table \ref{control2}, we vary $m$ from $2$ to $11$ and vary $n$ from $6$ to $24$. For each instance, one has $mb = 10$ for the sparse approach.
The column ``$ub$'' indicates the upper bound given by the dense approach with the relaxation order $d=1$. For these examples, with the relaxation order $d=2$, the sparse approach produces upper bounds that are very close to those given by the dense approach. And again the sparse approach is more scalable and more efficient than the dense one.

\begin{table}[htbp]
\caption{Results for control systems with $d=1$ and $m = 5$}\label{control1}
\rowcolors{2}{white}{gray!25}
\begin{center}
\begin{tabular}{cccccccc}
\hline
\rowcolor{gray!50}
&&\multicolumn{3}{c}{Sparse ($d=1$)}&\multicolumn{3}{c}{Dense ($d=1$)}\\
\rowcolor{gray!50}
\multirow{-2}*{$n$}&\multirow{-2}*{$lb$}&time&$ub$&$mb$&time&$ub$&$mb$\\
\hline
$20$&$0.9058$&$1.78$&$0.9316$&$12$&$9.92$&$0.9316$&$20$\\
\hline
$20$&$0.8142$&$1.62$&$0.8142$&$12$&$9.08$&$0.8142$&$20$\\
\hline
$30$&$1.4682$&$4.30$&$1.5132$&$14$&$57.8$&$1.5131$&$30$\\
\hline
$30$&$1.0924$&$4.42$&$1.0961$&$14$&$65.4$&$1.0961$&$30$\\
\hline
$40$&$1.1648$&$9.29$&$1.1977$&$16$&$249$&$1.1977$&$30$\\
\hline
$40$&$0.9772$&$9.69$&$0.9804$&$16$&$259$&$0.9804$&$30$\\
\hline
$50$&$1.3153$&$17.3$&$1.3248$&$18$&$660$&$1.3248$&$50$\\
\hline
$50$&$1.1884$&$17.5$&$1.1884$&$18$&$680$&$1.1884$&$50$\\
\hline
$60$&$1.8366$&$29.7$&$1.8820$&$20$&$2049$&$1.8820$&$60$\\
\hline
$60$&$1.3259$&$30.7$&$1.3259$&$20$&$1776$&$1.3259$&$60$\\
\hline
$70$&$1.8135$&$54.2$&$1.8578$&$22$&-&-&-\\
\hline
$70$&$1.2727$&$53.9$&$1.2727$&$22$&-&-&-\\
\hline
$80$&$2.3005$&$85.3$&$2.3445$&$24$&-&-&-\\
\hline
$80$&$1.4262$&$85.6$&$1.4262$&$24$&-&-&-\\
\hline
$90$&$1.8745$&$133$&$1.9020$&$26$&-&-&-\\
\hline
$90$&$1.4452$&$132$&$1.4452$&$26$&-&-&-\\
\hline
$100$&$2.2316$&$196$&$2.2733$&$28$&$*$&$*$&$*$\\
\hline
$100$&$1.5267$&$195$&$1.5267$&$28$&$*$&$*$&$*$\\
\hline
$110$&$2.3597$&$280$&$2.3943$&$30$&$*$&$*$&$*$\\
\hline
$110$&$1.5753$&$287$&$1.5753$&$30$&$*$&$*$&$*$\\
\hline
\end{tabular}
\end{center}
\end{table}

\begin{table}[htbp]
\caption{Results for control systems with $d=2$}\label{control2}
\rowcolors{2}{white}{gray!25}
\begin{center}
\begin{tabular}{p{0.15cm}p{0.15cm}p{0.7cm}p{0.7cm}p{0.4cm}p{1cm}p{0.4cm}p{0.7cm}c}
\hline
\rowcolor{gray!50}
&&&&\multicolumn{2}{c}{Sparse ($d=2$)}&\multicolumn{3}{c}{Dense ($d=2$)}\\
\rowcolor{gray!50}
\multirow{-2}*{$m$}&\multirow{-2}*{$n$}&\multirow{-2}*{$\quad lb$}&\multirow{-2}*{$\quad ub$}&time&\multicolumn{1}{c}{$ub$}&time&\multicolumn{1}{c}{$ub$}&$mb$\\
\hline
$2$&$6$&$0.9464$&$0.9782$&$0.42$&$0.9547$&$1.87$&$0.9539$&$21$\\
\hline
$3$&$8$&$0.7218$&$0.7467$&$0.60$&$0.7310$&$13.4$&$0.7305$&$36$\\
\hline
$4$&$10$&$0.7458$&$0.7738$&$0.75$&$0.7564$&$107$&$0.7554$&$55$\\
\hline
$5$&$12$&$0.8601$&$0.8937$&$1.08$&$0.8706$&$1157$&$0.8699$&$78$\\
\hline
$6$&$14$&$0.7875$&$0.8107$&$1.32$&$0.7958$&-&-&-\\
\hline
$7$&$16$&$1.1110$&$1.1531$&$1.81$&$1.1182$&$*$&$*$&$*$\\
\hline
$8$&$18$&$1.0487$&$1.0881$&$2.05$&$1.0569$&$*$&$*$&$*$\\
\hline
$9$&$20$&$0.7570$&$0.7808$&$2.52$&$0.7660$&$*$&$*$&$*$\\
\hline
$10$&$22$&$0.9911$&$1.0315$&$2.70$&$1.0002$&$*$&$*$&$*$\\
\hline
$11$&$24$&$0.7339$&$0.7530$&$3.67$&$0.7418$&$*$&$*$&$*$\\
\hline
\end{tabular}
\end{center}
\end{table}



\end{document}